\newcommand{\N}{{\mathbb N}}
\theoremstyle{plain}
\newtheorem{theo}{Theorem}[section]
\newtheorem{lemma}[theo]{Lemma}
\newtheorem*{zeck}{Zeckendorf's theorem}
\theoremstyle{definition}
\newtheorem*{rem}{Remark}
\begin{document}
\fancyhead{}
\renewcommand{\headrulewidth}{0pt}
\fancyfoot{}
\fancyfoot[LE,RO]{\medskip \thepage}
\fancyfoot[LO]{\medskip MONTH YEAR}
\fancyfoot[RE]{\medskip VOLUME , NUMBER }

\setcounter{page}{1}

\newcommand\numberthis{\addtocounter{equation}{1}\tag{\theequation}}
\newcommand{\fib}{\mathrm{fib}}
\newcommand{\of}{\mathrm{odfib}}

\title[Odd fibbinary numbers]{Odd fibbinary numbers\\ and the golden ratio}

\author{Linus Lindroos}
\address{Linus Lindroos\\
Department of Mathematical Sciences\\
        Georgia Southern University \\
Statesboro, GA 30460, USA}
\email{ll01383@georgiasouthern.edu}

\author{Andrew Sills}
\address{Andrew Sills\\
        Department of Mathematical Sciences\\
        Georgia Southern University \\
Statesboro, GA 30460, USA
}
\email{asills@georgiasouthern.edu}

\author{Hua Wang}
\address{Hua Wang\\
Department of Mathematical Sciences \\
Georgia Southern University \\
Statesboro, GA 30460, USA
}
\email{hwang@georgiasouthern.edu}
\thanks{The work of the third author was partially supported by a grant from the Simons Foundation (\#245307).}

\begin{abstract}
The fibbinary numbers are positive integers whose binary representation contains no consecutive ones.
We prove the following result:  If the $j$th odd fibbinary is the $n$th \emph{odd} fibbinary number, then $j = \lfloor n\phi^2 \rfloor - 1.$
\end{abstract}

\maketitle

\section{Background}
The Fibonacci numbers $\{F_n\}_{n\geq 0}$ are given by $F_0=0$, $F_1=1$, and
$$F_n = F_{n-1} + F_{n-2}$$ for $n\geq 2.$

Recall the following theorem of Zeckendorf~\cite{ez}:
\begin{zeck} Every positive integer can be written uniquely as the
sum of distinct, nonconsecutive Fibonacci numbers.  
\end{zeck}

The \emph{Zeckendorf representation} $z(n)$ of $n\in\N$ is the unique $k$-tuple
of decreasing nonconsecutive Fibonacci numbers whose sum is $n$.
Note that although $F_2 = F_1 = 1$, we always associate $1$ with $F_2$ in the Zeckendorf
representation.

  For example,
$$ z(4) = (3,1) = (F_4, F_2), $$
$$ z(5) = (5) = (F_5), $$
and
$$ z(100) = (89,8,3) = (F_{11} , F_6 , F_{4}).$$

The sequence $\{ \fib(n) \}_{n\geq 1}$ of \emph{fibbinary numbers}\footnote{According to~\cite{oeis}, the name ``fibbinary'' is due to Marc LeBrun: ``\dots integers whose 
binary representation contains no consecutive ones and noticed that the number of
such numbers with $n$ bits was $F_n$."---posting to \texttt{sci.math} by Bob Jenkins, July
17, 2002. } is given as follows:
For $n>0$, if $z(n) = (F_{i_1}, F_{i_2}, \dots F_{i_k})$ is the Zeckendorf representation of $n$,  then
$$\fib(n) := \sum_{j=1}^k 2^{i_j-2}.$$

For example,
$$ \fib(4) = \fib(F_4 + F_2) = 2^{4-2} + 2^{2-2} = 101_2 = 5. $$
$$ \fib(5) = \fib(F_5) = 2^{5-3} = 1000_2 = 8. $$
$$ \fib(100) = \fib({F_{11}+F_6+F_4})
 = 2^{11-2} + 2^{6-2} + 2^{4-2} = 1000010100_2 = 532,$$
where the $2$ subscript indicates the usual binary (base two) representation.

\section{Statement of main result relating odd fibbinaries to the golden ratio}

It is easy to generate the odd fibbinary numbers from the binary representations and find the corresponding value of the original integer, e.g.,
\begin{itemize}
\item $1_2 = 2^0 = \fib( F_2) = \fib(1)$;
\item $101_2 = 2^2 + 2^0 =\fib( F_4 + F_2) = \fib(4)$;
\item $1001_2 = 2^3 + 2^0 = \fib(F_5 + F_2) = \fib(6)$;
\item $\ldots$.
\end{itemize}

Let $\of(n)$ denote the $n$th odd fibbinary number, i.e.,
$$ \of(1) = 1_2, \of(2) =101_2, \of(3) = 1001_2, \ldots .$$
Let  $$Z(n):= \fib^{-1}(\of(n)),$$ so that $$ Z(1) =1, Z(2) = 4, Z(3) = 6, Z(4)=9,\ldots . $$
In other words, if the $n$th odd fibbinary number is the $j$th fibbinary number, then $j=Z(n)$.

This sequence
$$ 1,4,6,9,12,14,17, \ldots $$
appears to be ``A003622'' in OEIS~\cite{oeis2}, defined as
\begin{equation}\label{eq:phi}
 \{ \left\lfloor n \phi^2 \right\rfloor -1 \}_{n=1}^{\infty} = \{ \left\lfloor n \phi \right\rfloor +n -1 \}_{n=1}^{\infty}
\end{equation}
where $$\phi = \frac{1 + \sqrt{5}}{2}$$ is the golden ratio,  which satisfies $\phi^2 = \phi + 1$, and of course arises as
\[ \lim_{n\to\infty} \frac{F_{n+1}}{F_n} = \phi . \]

\begin{tabular}{| l |  r |  r | c |}
\hline\hline
$j$ & $z(j)$ & $\fib(j)$ & $\of^{-1}(j) $\\
\hline\hline
$1 = \lfloor \phi^2 \rfloor - 1 = Z(1)$ &  $(1) = (F_2)$  & $1_2 = 1$ & 1\\ 
\hline
$2$                                             &  $(2) = (F_3)$ & $10_2=2$ & ---\\
$3$  &  $(3) = (F_4)$              &  $100_2 = 4 $ & --- \\
$4$  = $\lfloor 2 \phi^2 \rfloor -1=Z(2) $ & $(3,1) = (F_4, F_2)$ & $101_2 = 5$ & $2$\\
\hline
$5$ & $(5) = (F_5)$  & $1000_2 = 8$ & --- \\
$6 = \lfloor 3 \phi^2 \rfloor -1 =Z(3)$ & $(5,1)= (F_5,F_2)$ & $1001_2 = 9$ & $3$ \\
\hline
$7$ & $(5,2) = (F_5, F_3)$ & $1010_2=10$ &--- \\
$8$ & $(8) = (F_6) $ & $10000_2 = 16$&  --- \\
$9 = \lfloor 4 \phi^2 \rfloor -1 =Z(4)$ & $(8,1) = (F_6,F_2)$ & $10001_2 = 17$ & $4$ \\
\hline
$10$ & $(8,2)= (F_6, F_3)$ & $10010_2=18$ & --- \\
$11$ & $(8,3) = (F_6,F_4)$ & $10100_2=20$ &---\\
$12=\lfloor 5 \phi^2 \rfloor -1=Z(5)$ & $(8,3,1) = (F_6,F_4,F_2)$ & $10101_2 = 21$ & $5$ \\
\hline
\end{tabular}
\vskip 3mm
The correspondence displayed above is naturally conjectured to be true in general.

\begin{theo}\label{theo:main}
   Let $j$ be a positive integer such that the $j$th fibbinary number is odd.  Suppose that this $j$th fibbinary number is the $n$th \emph{odd} fibbinary number.  Then 
\begin{equation}\label{eq:id}  
j= Z(n) = \left\lfloor n \phi^2 \right\rfloor -1 = \left\lfloor n \phi \right\rfloor +n-1
 \end{equation}
for any $n\geq 1$.
\end{theo}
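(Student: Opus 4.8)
The plan is to reduce \eqref{eq:id} to a counting statement about Zeckendorf representations and then to close it with the Binet formula. First I would record that $\fib$ is a \emph{strictly increasing} bijection from $\Z$ (positive integers) onto the set of fibbinary numbers: by Zeckendorf's theorem $n\mapsto z(n)$ identifies the positive integers with the nonempty sets of nonconsecutive indices $\geq 2$, and the assignment $(F_{i_1},\dots,F_{i_k})\mapsto\sum_j 2^{i_j-2}$ identifies these with the positive integers having no two consecutive binary ones. Both the Zeckendorf order and the binary order are governed by the largest index at which two representations differ, so $\fib$ preserves order. Consequently $\of(n)$, the $n$th smallest odd fibbinary, satisfies $Z(n)=\fib^{-1}(\of(n))=a_n$, where $a_n$ denotes the $n$th smallest $j$ for which $\fib(j)$ is odd. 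Since $\fib(j)=\sum 2^{i_j-2}$ is odd exactly when the bit $2^0$ is present, i.e.\ exactly when $F_2\in z(j)$, the object to analyze is $A:=\{\,j:\ \text{the least index in }z(j)\text{ is }2\,\}$, and the theorem becomes the assertion $a_n=\lfloor n\phi^2\rfloor-1$.

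Next I would parametrize $A$. An integer $j$ lies in $A$ iff $z(j)$ consists of $F_2$ together with nonconsecutive indices $\geq 4$; shifting those indices down by two yields a unique Zeckendorf representation, say of $m\geq 0$ (the empty one for $m=0$). Thus $j=F_2+\sum_i F_{c_i+2}=1+T(m)$, where $z(m)=(F_{c_1},\dots,F_{c_k})$ and $T(m):=\sum_{i=1}^k F_{c_i+2}$ (with $T(0)=0$) is the result of shifting every Zeckendorf index of $m$ up by two. Since shifting all indices by a fixed amount preserves value order and nonconsecutiveness (and index $2$ is compatible with indices $\geq 4$), the map $m\mapsto 1+T(m)$ is an increasing bijection from $\{0,1,2,\dots\}$ onto $A$, whence $a_n=1+T(n-1)$. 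It therefore suffices to prove the exact floor identity $T(m)=\lfloor (m+1)\phi^2\rfloor-2$ for all $m\geq 0$; then $a_n=1+\lfloor n\phi^2\rfloor-2=\lfloor n\phi^2\rfloor-1$, and the second equality in \eqref{eq:id} follows from $\lfloor n\phi^2\rfloor=\lfloor n\phi\rfloor+n$ (using $\phi^2=\phi+1$).

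The crux is this floor identity, which I would settle with the Binet formula. Writing $\psi=\tfrac{1-\sqrt5}{2}=-1/\phi$, so $F_{c}=\tfrac1{\sqrt5}(\phi^{c}-\psi^{c})$, a short computation using $\phi^2-\psi^2=\sqrt5$ gives
\[
(m+1)\phi^2 - T(m) \;=\; \phi^2 - \sum_{i=1}^k \psi^{c_i}.
\]
Everything then rests on controlling the error $E:=\sum_{i=1}^k\psi^{c_i}$. Splitting into even and odd indices and bounding each part by the corresponding full geometric series,
\[
\phi-2 \;=\; \sum_{c\ \mathrm{odd},\,c\geq 3}\psi^{c} \;<\; E \;<\; \sum_{c\ \mathrm{even},\,c\geq 2}\psi^{c} \;=\; \frac1\phi ,
\]
where both inequalities are strict because a finite index set omits infinitely many terms of each series (the odd terms being negative and the even terms positive). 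Hence $E\in(\phi-2,\tfrac1\phi)$, and since $\phi^2-\tfrac1\phi=2$ and $\phi^2-(\phi-2)=3$ we get $(m+1)\phi^2-T(m)=\phi^2-E\in(2,3)$. As $T(m)\in\Z$, this forces $\lfloor (m+1)\phi^2\rfloor=T(m)+2$, which is the desired identity.

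The main obstacle is precisely this last estimate: the point is to extract an \emph{exact} floor, not merely an asymptotic, so one must trap $E$ inside the interval $(\phi-2,\tfrac1\phi)$, whose length is exactly $\tfrac1\phi-(\phi-2)=1$. This width-one window (mirrored by $\phi^2-\tfrac1\phi=2$) is exactly what converts the analytic bound into the integer statement $\lfloor(m+1)\phi^2\rfloor=T(m)+2$; any slack would be fatal. By contrast, the order-preservation facts invoked in the two reductions are routine and can be isolated as a single lemma about comparing Fibonacci (or binary) sums through their top differing index.
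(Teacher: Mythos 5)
Your proposal is correct, and it takes a genuinely different route from the paper. The paper proceeds by induction on Fibonacci-indexed blocks: it proves the recursions $\of(F_n+k)=2^n+\of(k)$ and $Z(F_n+k)=F_{n+2}+Z(k)$ for $1\leq k\leq F_{n-1}$, then shows that $\lfloor n\phi^2\rfloor-1$ satisfies the same recursion, using $Z(F_n+1)$ as a ``stepping stone'' and controlling the fractional parts $\{k\tau\}\pm\{F_n\tau\}$ (where $\tau=\phi-1$) so that they never cross $0$ or $1$. You instead produce a closed form: the indices $j$ with odd $\fib(j)$ are exactly $1+T(m)$, $m\geq 0$, where $T$ shifts every Zeckendorf index up by two, and then you prove the single exact identity $T(m)=\lfloor(m+1)\phi^2\rfloor-2$ by Binet's formula, trapping the error $E=\sum_i\psi^{c_i}$ in the open unit interval $(\phi-2,\tfrac1\phi)$. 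The two arguments use the same underlying ingredients --- Zeckendorf structure and geometric-series bounds on powers of $\tau$ (your $\psi=-\tau$, and your interval trap is the exact analogue of the paper's cases (i) and (ii)) --- but your architecture replaces the induction with a direct bijective parametrization plus one floor computation. What this buys: a uniform treatment of all $n\geq 1$ (the paper argues for $n\geq 3$ and checks small cases by hand), and an explicit formula for the $n$th integer whose Zeckendorf representation contains $F_2$, which is of independent interest; what the paper's route buys is the self-similar recursive structure of $\of$ and $Z$ (its Lemmas 3.1 and 3.2), which your argument bypasses. Your two ``routine'' reductions (order-preservation of $\fib$ and of $T$) do deserve the one-lemma proof you promise --- both follow from the fact that a Zeckendorf (resp.\ binary) sum with all indices below $i$ is strictly less than $F_i$ (resp.\ $2^i$) --- but with that lemma stated, the proof is complete.
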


\section{Proof of Theorem~\ref{theo:main}}
It is easy to check that \eqref{eq:id} is true for small values of $n$. In the rest of this note we provide a general proof for $n\geq 3$.

Next, we record the following observation on $\of(n)$ as a lemma.

\begin{lemma}\label{prop:fib}
For any $1 \leq k \leq F_{n-1}$ and $n\geq 2$, we have
$$ \of(F_n + k) = 2^{n} + \of(k) . $$
\end{lemma}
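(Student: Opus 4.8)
The plan is to argue directly from binary representations and to reduce everything to a single Fibonacci-counting fact. Recall that an odd fibbinary number is exactly a positive integer whose binary expansion contains no two consecutive $1$'s and whose units bit (the coefficient of $2^0$) equals $1$; consequently its $2^1$ bit is forced to be $0$. The heart of the argument is the map $x \mapsto 2^n + x$. I would first check that this is an order-preserving bijection from the set of odd fibbinary numbers strictly less than $2^{n-1}$ onto the set of odd fibbinary numbers whose highest nonzero bit lies in position $n$. In one direction, if $x$ is an odd fibbinary below $2^{n-1}$ then its bits occupy positions $0$ through $n-2$, so the bit in position $n-1$ is $0$; switching on the bit in position $n$ therefore creates no consecutive pair, leaves the units bit equal to $1$, and yields an odd fibbinary with top bit $n$. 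Conversely, any odd fibbinary with top bit $n$ has a $0$ in position $n-1$, so subtracting $2^n$ returns an odd fibbinary below $2^{n-1}$.

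Next I would establish the counting identity: for every $m\ge 1$ there are exactly $F_m$ odd fibbinary numbers less than $2^m$. This is the one genuine computation, and I expect it to be the main obstacle; everything else is bookkeeping. Because the units bit is $1$ and the $2^1$ bit is $0$, such a number is determined by a binary string with no two consecutive $1$'s occupying positions $2,\dots,m-1$, and for $m\ge 2$ the number of such strings of length $m-2$ is $F_m$ by the standard correspondence between Fibonacci numbers and binary strings avoiding $11$ (itself provable by conditioning on whether the top bit is $0$ or $1$), while the cases $m\le 1$ are immediate. In particular there are $F_{n-1}$ odd fibbinary numbers below $2^{n-1}$ and $F_n$ of them below $2^n$.

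Finally I would assemble the pieces. By the counting identity, the values $\of(1)<\of(2)<\cdots<\of(F_{n-1})$ are precisely all odd fibbinary numbers less than $2^{n-1}$, and the first $F_n$ odd fibbinary numbers $\of(1),\dots,\of(F_n)$ are precisely those less than $2^n$. The bijection of the first paragraph carries the former list, in increasing order, onto the odd fibbinary numbers with top bit $n$; since every such number lies in $[\,2^n,\,2^n+2^{n-1}\,)$ it exceeds all $F_n$ odd fibbinary numbers below $2^n$ and is smaller than any odd fibbinary with top bit exceeding $n$. Hence these $F_{n-1}$ numbers occupy exactly the index positions $F_n+1,\dots,F_n+F_{n-1}=F_{n+1}$ in the increasing enumeration of odd fibbinary numbers, and order-preservation of $x\mapsto 2^n+x$ gives $\of(F_n+k)=2^n+\of(k)$ for $1\le k\le F_{n-1}$, which is the claim.
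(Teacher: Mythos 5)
Your proof is correct. It is, however, a genuinely fuller argument than the one in the paper, which disposes of the lemma in a single sentence --- ``Immediate as each digit in the binary representation of $\of(n)$ corresponds to a specific Fibonacci number'' --- thereby leaving the real ingredients implicit. Those ingredients are exactly what you supply: (a) the map $x \mapsto 2^n + x$ is an order-preserving bijection from the odd fibbinary numbers below $2^{n-1}$ onto those whose top bit sits in position $n$, and (b) the counting identity that there are precisely $F_m$ odd fibbinary numbers below $2^m$, which is what forces the numbers with top bit $n$ to occupy index positions $F_n+1,\dots,F_n+F_{n-1}=F_{n+1}$ in the increasing enumeration. Notably, your argument works entirely with binary strings (no two consecutive ones, units bit equal to $1$) and never invokes Zeckendorf's theorem or the map $\fib$ at all, so it is self-contained; the counting fact (b), which you rightly flag as the only genuine computation, is the same fact the paper merely alludes to in its footnote crediting LeBrun. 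What the paper's viewpoint buys instead is economy of reuse: by phrasing the lemma through the digit-by-digit dictionary between binary representations and Zeckendorf representations, the companion Lemma~\ref{prop:z}, namely $Z(F_n+k)=F_{n+2}+Z(k)$, follows immediately by transporting the present identity across $\fib^{-1}$, whereas from your purely binary standpoint that translation step would still need the dictionary to be set up afterwards.
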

\begin{proof} Immediate as each digit in the binary representation of $\of(n)$ corresponds to a specific Fibonacci number. \end{proof}

For example, 
$$ \of(10)=\of(8+2)=\of(F_6 + 2)=1000101_2 = 2^6 + 101_2 = 2^6 + \of(2) . $$
\vskip 5mm

\begin{lemma}\label{prop:z}
For any $1 \leq k \leq F_{n-1}$ and $n\geq 2$, we have
$$ Z(F_n + k) = F_{n+2} + Z(k) . $$
\end{lemma}
\begin{proof} Since each digit in the binary representation of $\of(n)$ corresponds to a distinct Fibonacci number in the sum of $Z(n)$ under the Zeckendorf representation, we can claim the same for $Z(n)$. That is,
\begin{align*}
Z(F_n + k) & =  \fib^{-1} (\of (F_n + k) ) \\
   & = \fib^{-1} (2^n + \of (k) ) \\
   & = F_{n+2} + \fib^{-1} (\of (k)) \\
   & = F_{n+2} + Z(k) .
\end{align*}
\end{proof}

For example,
$$ Z(10) = Z(F_6 + 2) = 25 = 21 + 4 = F_8 + Z(2). $$

From Lemma~\ref{prop:z}, we have
\begin{equation}\label{eq:z_d}
Z(F_n + k) - Z(F_n + k -1) = Z(k) - Z(k-1)
\end{equation}
for $2\leq k \leq F_{n-1}$ and
\begin{equation}\label{eq:zn}
Z(F_n + 1) = F_{n+2} + 1 .
\end{equation}

Now to show \eqref{eq:id} for any $n$, we only need to show analogues of \eqref{eq:z_d} and \eqref{eq:zn} for $ \left\lfloor n \phi^2 \right\rfloor -1 $, i.e., 
\begin{align*}
  \left\lfloor (F_n + k) \phi \right\rfloor - \left\lfloor (F_n + k - 1) \phi \right\rfloor 
&=  \left\lfloor (F_n + k) \phi^2 \right\rfloor - \left\lfloor (F_n + k - 1) \phi^2 \right\rfloor \\
&=  \left\lfloor k \phi^2 \right\rfloor - \left\lfloor (k - 1) \phi^2 \right\rfloor \\
&=  \left\lfloor k \phi \right\rfloor - \left\lfloor (k - 1) \phi \right\rfloor  \numberthis \label{eq:z_d'}
\end{align*}
for $2\leq k \leq F_{n-1}$, and
\begin{equation}\label{eq:zn'}
 \left\lfloor (F_n + 1) \phi \right\rfloor + F_n = \left\lfloor (F_n + 1) \phi^2 \right\rfloor -1 =  Z(F_n + 1) .
\end{equation}

\begin{rem}
Intuitively, this can be considered as using $Z(F_n + 1)$ as the ``stepping stone'' to prove \eqref{eq:id} for 
$Z(F_n+k)$ for $k = 2, 3, \ldots , F_{n-1}$ using induction.
\end{rem}

In order to establish \eqref{eq:z_d'}, it is essentially sufficient to show that $\{ F_n \phi \}$  is never large enough to affect the difference $ \left\lfloor k \phi \right\rfloor - \left\lfloor (k - 1) \phi \right\rfloor $,
where $\{ x \}:= x - \lfloor x \rfloor$, the fractional part of the real number $x$.

We make use of the following fact, which is easily established by induction in $n$:
\begin{equation}\label{eq:fact}
(-1)^n\tau^n = -F_n \tau + F_{n-1}, 
\end{equation}
where $\tau = \frac{\sqrt{5} - 1}{2} = \phi - 1$ satisfying $\tau^2 = -\tau + 1$.
Consequently,
$$ \{F_n \phi \} = \{ F_n \tau \} = \{ -(-\tau)^n \} $$
for any $n$.

Making use of the fact that $\tau = \phi - 1$, it suffices to show
\begin{equation}\label{eq:tau}
\left\lfloor (F_n + k) \tau \right\rfloor - \left\lfloor (F_n + k - 1) \tau \right\rfloor
= \left\lfloor k \tau \right\rfloor - \left\lfloor (k - 1) \tau \right\rfloor . 
\end{equation}

To show \eqref{eq:tau}, simply consider
$$ \{k \tau \} \pm \{ F_n \tau \} $$
for any $1\leq k\leq F_{n-1}$. We will show that this value never reaches 1 or goes below zero and hence $\{ F_n \tau \}$ will not affect 
$\left\lfloor k \tau \right\rfloor - \left\lfloor (k - 1) \tau \right\rfloor $. 

\noindent {\bf (i)} To show $ \{k \tau \} + \{ F_n \tau \} <1 $, consider the Zeckendorf representation of $k$ 
as the sum of non-consecutive Fibonacci numbers. 

If $k < F_{n-1}$, then
$$ k = F_{a_1} + F_{a_2} + \ldots + F_{a_s} $$
where 
$$ 1\leq a_1 \leq a_2 - 2 \leq a_2 \leq a_3 - 2 \leq \ldots \leq a_s \leq n-2 . $$
Then
\begin{align*}
   \{k \tau \} + \{ F_n \tau \} 
&=   \{ (F_{a_1} + \ldots +  F_{a_s} ) \tau \}  + \{ F_n \tau \} \\
&\leq  \tau^{a_1} + \tau^{a_2} + \ldots + \tau^{a_s} + \tau^n \\
&<  \tau + \tau^3 + \tau^5 + \ldots \\
&=   \frac{\tau}{1 - \tau^2}  \\
&=   1.
\end{align*}

If $k=F_{n-1}$, we have
$$ \{k \tau \} + \{ F_n \tau \}  =   \tau^{n-1}  + \tau^{n} \leq   \tau^{2} +  \tau^{3}  <  1 $$
for any $n\geq 3$.

\noindent {\bf (ii)} To show $ \{k \tau \} - \{ F_n \tau \} >0 $, simply note that 
\begin{align*}
   \{k \tau \} - \{ F_n \tau \} 
&=   \{ (F_{a_1} + \ldots +  F_{a_s} ) \tau \}  - \{ F_n \tau \}  \\
&\geq  \tau^{a_1} - \tau^{a_2} - \ldots - \tau^{a_s} - \tau^n \\
&>  \tau^{a_1} - \tau^{a_1+2} - \tau^{a_1+4} - \ldots  \\
& =  \tau^{a_1}\left( 1 - \frac{\tau^2}{1-\tau^2} \right) \\
& >   0
\end{align*}
if $k<F_{n-1}$ and
$$ \{k \tau \} - \{ F_n \tau \}  =  \tau^{n-1}  - \tau^{n}  >  0 $$
if $k=F_{n-1}$. 

Cases (i) and (ii) imply that
$$ \left\lfloor F_n \tau + k \tau \right\rfloor = \left\lfloor F_n \tau \right\rfloor + \left\lfloor k \tau \right\rfloor . $$Thus \eqref{eq:tau} and \eqref{eq:z_d'} are proved.

By \eqref{eq:zn}, \eqref{eq:zn'} is equivalent to
\begin{equation}\label{eq:zn''}
\left\lfloor (F_n + 1) \phi \right\rfloor = F_{n+1} + 1 .
\end{equation}
Fact \eqref{eq:fact} implies that
\begin{align*}
 \left\lfloor (F_n + 1) \phi \right\rfloor 
&=  F_n + 1 + \left\lfloor (F_n + 1) \tau \right\rfloor \\
&=  F_n + 1 + \left\lfloor F_{n-1} - (-\tau)^n + \tau \right\rfloor \\
&=  F_n + 1 + F_{n-1} + \left\lfloor \tau - (-\tau)^n  \right\rfloor \\
&=  F_{n+1} + 1
\end{align*} 
for $n\geq 3$. Thus \eqref{eq:zn''} and hence \eqref{eq:zn'} is proved. \qed

\section*{Acknowledgements}
We thank the anonymous referee for carefully reading the manuscript and providing helpful suggestions.

\medskip

\noindent MSC2010: 11B39

\end{document}